\documentclass[12pt]{amsart}

\usepackage{latexsym, amsthm, amscd, graphicx, euscript, float}
\usepackage{amssymb,amsthm,amsmath}

\newtheorem{theorem}{Theorem}[section]
\newtheorem{lemma}[theorem]{Lemma}

\newtheorem{proposition}[theorem]{Proposition}

\theoremstyle{remark}
\newtheorem{definition}[theorem]{Definition}
\newtheorem{remark}[theorem]{Remark}
\newtheorem{example}[theorem]{Example}
\newtheorem{problem}[theorem]{Problem}

\numberwithin{equation}{section}

\newcommand{\bC}{{\mathbb C}}

%\thanks{Received date: August 31, 2014}

%%%%%%%%%%%% METHOD FOR HOUR AND MINUTE %%%%%%%%%%%%%
%\newcounter{minutes}\setcounter{minutes}{\time}
%\divide\time by 60
%\newcounter{hours}\setcounter{hours}{\time}
%\multiply\time by 60 \addtocounter{minutes}{-\time}
%%%%%%%%%%%%%%%%%%%%%%%%%%%%%%%%%%%%%%%%%%%%%%%%%%%%%

\keywords{rational functions with nodes, compactification}
\subjclass{Primary 30F60; Secondary 32G15, 37F30}

\begin{document}

\title[Rational functions with nodes] % running head
{\vspace*{.1cm} Rational functions with nodes}

%\date{\today}

\author[Masayo Fujimura]{\noindent Masayo Fujimura}
\email{masayo@nda.ac.jp}
\address{Department of Mathematics, National Defense Academy of Japan}

\thanks{The first author was partially supported by 
           Grants-in-Aid for Scientific Research (C) (Grant No. 22540240).}

\author[Masahiko Taniguchi]{\noindent Masahiko Taniguchi}
\email{tanig@cc.nara-wu.ac.jp}
\address{Department of Mathematics, Nara Women's University}
\thanks{The second author was partially supported by 
           Grants-in-Aid for Scientific Research (C) (Grant No. 23540202)
           and Grant-in-Aids for Scientific Research (B) (Grant No. 25287021).}

\begin{abstract} 
  A natural kind of compactification of the virtual 
  moduli spaces of rational functions of one 
  complex variable is given. To describe the boundary points geometrically, 
  the authors introduce the concept of
  rational functions with nodes, defined on partially crushed 
  punctured Riemann spheres with nodes.
\end{abstract}

\maketitle

\section{Introduction and main results}

A {\em dynamical structure} of a rational function 
$R:\hat{\bC}\to \hat{\bC}$ is the 
M\"obius conjugacy class of $R$. 
The {\em moduli space of rational functions} of degree $d$
is the set of all dynamical structures of rational functions of degree $d$,
and is denoted by ${M}_d $. For the details and backgrounds, see for instance,
\cite{FT08}, \cite{M}, \cite{MNTU}, and \cite{S}.

In this note, we introduce 
``rational functions with nodes'' and the dynamical structures of them. 
Here, we also 
consider a natural kind of 
marking for such functions, and hence actually we discuss about  
marked rational functions with nodes.
To give precise definitions of them, 
first we recall that 
a {\em $\partial$-marked 
$n$-punctured Riemann sphere $\hat{S}=({\mathcal  D}(\hat{S}), N(\hat{S}))$  
with nodes} is  
a pair of a family ${\mathcal  D}(\hat{S})= \{D_m\}_{m=1}^M$ of
$\partial$-marked $n_m\, (\geq 3)$-punctured Riemann spheres 
$D_m$ 
and the node set $N(\hat{S})$ consisting of 
$J$ pairs $\{p_j,p'_{j}\}$ of 
punctures of different $D_{m(j)}$ and $D_{m'(j)}$,
which satisfy that
\[
\sum_{m=1}^M\, n_m = 2J + n, \qquad J = M-1 \leq n-3,
\]
and that
the topological space $\hat{S}^*$ 
obtained from the disjoint union 
$|{\mathcal  D}|(\hat{S})=\sum_{m=1}^M\, D_m$ 
by filling and identifying all pairs of punctures in 
$N(\hat{S})$ is connected. Here, equipping $\hat{S}^*$ with a 
$\partial$-marking induced from that of $\hat{S}$,
we call $\hat{S}^*$ the {\em realization} of 
$\hat{S}$. 
Cf., for instance, \cite{B}, \cite{Bu}, \cite{FunT}, and \cite{IT}. 
Also see Definition \ref{mark}
below for the precise definition of $\partial$-marking.

Punctures of $\hat{S}$ in $N(\hat{S})$ are called {\em nodal}, 
and {\em non-nodal} otherwise.
When we ignore $\partial$-marking, we call $\hat{S}$ simply 
an {\em $n$-punctured Riemann sphere with nodes}.
In the sequel, we regard punctured Riemann spheres without nodes also as  
those with (no) nodes.

Now, the limit of rational functions may degenerate to the identity function 
on some components of the 
punctured Riemann sphere with nodes obtained as the limit of 
the punctured Riemann spheres where the functions are defined. 
To describe such phenomena precisely, we generalize  the above definition as follows. 

\begin{definition}[Partial crush]\label{crush}
{\rm 
A {\em partial crush of level $n$} is an ordered pair $(\hat{S}, \hat{R})$ of 
an $n$-punctured Riemann sphere 
$\hat{S}=({\mathcal  D}(\hat{S}), N(\hat{S}))$  
with nodes and a pair $\hat{R}$ 
of a family ${\mathcal  D}(\hat{R})= \{D_i'\}_{i=1}^I$ of
$n_i$-punctured Riemann spheres 
$D_i'$ 
and the node set $N(\hat{R})$ consisting of 
the pairs $\{p_r,p'_{r}\}$ of 
punctures of different $D_{i(r)}'$ and $D_{i'(r)}'$ 
such that
\[
{\mathcal  D}(\hat{R}) \subset {\mathcal  D}(\hat{S}), \qquad N(\hat{R})
\subset N(\hat{S}),
\]
which satisfies two additional conditions:
Every pair in $ N(\hat{S})$ contains a puncture of $\hat{R}$
and  every component in ${\mathcal  D}(\hat{S})- {\mathcal  D}(\hat{R})$ 
has at least two non-nodal punctures of $\hat{S}$. 

We say that two partial crushes  $(\hat{S}_1, \hat{R}_1)$ 
and $(\hat{S}_2, \hat{R}_2)$ of level $n$ are {\em equivalent} if 
there is a homeomorphism $f: |{\mathcal  D}|(\hat{S}_1)
\to |{\mathcal  D}|(\hat{S}_2)$ which 
preserves punctures and nodes and is a conformal map of 
 $|{\mathcal  D}|(\hat{R}_1)$ onto 
$|{\mathcal  D}|(\hat{R}_2)$. 
A {\em partially crushed 
$n$-punctured Riemann sphere 
with nodes} is the equivalence class $[\hat{S}, \hat{R}]$ of 
a partial crush $(\hat{S}, \hat{R})$ of level $n$.
}
\end{definition}

In the sequel, 
we abbreviate $[\hat{S}, \hat{R}]$ to $\hat{R}$ when it causes no confusion.
Also, we call a component in ${\mathcal  D}(\hat{S})- {\mathcal  D}(\hat{R})$
a {\em crushed component} of $\hat{R}$,
though it is not a component in ${\mathcal  D}(\hat{R})$. 

\begin{remark}
{\rm
Let $\overline{D_i'}^*$ be the closure of 
$D_i'\in  {\mathcal  D}(\hat{R})$ in $\hat{S}^*$. Then by definition, 
every component of $W=\hat{S}^* - \bigcup_{i=1}^I\, \overline{D_i'}^*$ 
is a punctured Riemann sphere belonging to 
${\mathcal  D}(\hat{S})- {\mathcal  D}(\hat{R})$. 
The second additional condition 
that every crushed component has at least 
two non-nodal punctures is crucial  
to give a natural kind of definition of ``rational function with node". 
Also see the proof of Theorem \ref{thm:1}.
}
\end{remark}

Such an $[\hat{S}, \hat{R}]$ can be characterized 
by the crush data as follows.
For every crushed component $D$, let $L(D)$ and 
$B(D)$ be the number of all non-nodal punctures 
of $\hat{S}$ on $D$ and the set of all  
punctures of $\hat{R}$ in $N(\hat{S})$ 
paired with punctures on $D$, respectively. 
We call $B(D)$ a {\em singular bouquet} of punctures of $\hat{R}$, 
and $L(D)$ the {\em level} of the singular bouquet $B(D)$, 
which is also denoted by $L(B(D))$. 
Here note that every singular bouquet contains 
at most one puncture of $D_i'$ 
for every $D_i'$ in ${\mathcal  D}(\hat{R})$. 
A puncture $p$ in a singular bouquet $B$ is called {\em singular}, 
and we call $L(B)$ also the {\em level} $L(p)$ 
of $p$. 

Let $\{B_1, \cdots, B_N\}$  $(N=M-I)$ be 
the maximal set of the singular bouquets of $\hat{R}$. Then  
the set of all pairs $\{(B_\ell, L(B_\ell))\mid \ell=1, \cdots, N\}$ is called  
the {\em crush data of $\hat{R}$}.
Let $\{q_1, \cdots, q_A\}$ be the set of 
all non-nodal non-singular punctures of $\hat{R}$, and we have 
\[
A + \sum_{\ell =1}^N\, L(B_\ell) = n.
\]
We set 
$X(\hat{R})=\{q_1, \cdots, q_A, B_1, \cdots, B_N\}$ and write $X(\hat{R})$ 
also as $\{q_r\}_{r=1}^{A+N}$. Further, set $L(q_r) = 1$ for every $r\leq A$.

\begin{remark}
{\rm
The crush data of $\hat{R}$ depend on the choice of $\hat{S}$.
But, possible choice of 
the set of the singular bouquets and their levels is finite in number. 
Also by the second additional condition in Definition \ref{crush}, 
the level of every singular bouquet, or of every singular puncture, 
is not less than $2$. 
}
\end{remark}

\begin{definition}[$\partial$-marking]\label{mark}
{\rm 
A {\em $\partial$-marking}
of a partially crushed 
$n$-punctured Riemann sphere $[\hat{S},\hat{R}]$
with nodes is a surjection $\iota$ of $\{1,\cdots, n\}$ to the set
$X(\hat{R})$ such that $\iota^{-1}(q_r)$ consist  
of $L(q_r)$ values for every $q_r\in X(\hat{R})$.
If there are no crushed components, 
a $\partial$-marking is just an order of all non-nodal punctures.

We say that  a partially crushed 
$n$-punctured Riemann sphere  $[\hat{S},\hat{R}]$, or simply  $\hat{R}$, 
with nodes is {\em marked} if we equip $\hat{S}$ with a $\partial$-marking,  
which also canonically induces the $\partial$-marking of $\hat{R}$. 
In the sequel,
a marked $\hat{R}$ is denoted by the same $\hat{R}$  
unless we need the $\partial$-marking explicitly. 
}
\end{definition}

A $\partial$-marking of $\hat{R}$ can be considered 
 also as  
an ordered set of 
disjoint subsets $E_r$ consisting of $L(q_r)$ values in 
$\{1,\cdots, n\}$ for every $q_r\in X(\hat{R})$ 
which cover $\{1,\cdots, n\}$, and is 
always induced  canonically  from the $\partial$-marking 
of $\hat{S}$, where $\hat{R}=[\hat{S}, \hat{R}]$. 

\begin{definition}[Realization]
{\rm 
For a marked partially crushed $n$-punctured Riemann sphere $\hat{R}$
with nodes, let $\hat{R}^*$ be 
obtained from $|{\mathcal  D}|(\hat{R})$ 
by filling and identifying every pair of punctures in 
$N(\hat{R})$ by one point, which is called a {\em non-singular node} of 
$\hat{R}^*$. 
Equipping  $\hat{R}^*$ with the $\partial$-marking of $\hat{R}$,
we call $\hat{R}^*$ the {\em realization} of 
$\hat{R}$. 
}
\end{definition}

\begin{remark}
{\rm 
$\hat{R}^*$ is not necessarily connected. 
If it is connected, then every singular bouquet 
consists of a single singular puncture.
}
\end{remark}

We call every connected component of $\hat{R}^*$ 
an {\em ordinary part} of $\hat{R}$, while 
every $D_i$ of ${\mathcal  D}(\hat{R})$ an {\em ordinary component}  
of $\hat{R}$. 

\begin{definition}[Rational function with nodes]\label{RFN}
{\rm
A {\em rational function $({\mathcal  F}, \hat{R})$ with nodes of type $d$} is 
a family ${\mathcal  F}=\{F_i\}_{i=1}^I$ of 
rational functions $F_i$ on ordinary components $D_i'$ 
of a partially crushed $(d+1)$-punctured Riemann sphere 
$\hat{R}=(\{D_i'\}_{i=1}^I, N(\hat{R}))$ with nodes 
satisfying the following conditions.
\begin{enumerate}
  \item
    Every function $F_i$ is not the identity and has its (not necessarily simple)  
    fixed points only at punctures $p$ of $D_i'$ with   
    multiplicity not greater than the level $L(p)$ of $p$.
  \item
    ({\bf Index formula at nodes}) Every nodal puncture of $\hat{R}$ 
    is either a simple fixed point of, 
    or not fixed by, the function in ${\mathcal  F}$ corresponding to 
    the puncture. 
    The sum of the dynamical indices at the pair of  
    punctures in the same node is $1$.
\end{enumerate}
}
\end{definition}

Next, recall that the {\em generic locus} $GM_d$ of the moduli space 
$M_d$ is the sublocus corresponding to all 
rational functions of degree $d$ 
with simple fixed points only, which are called {\em generic}. 
A marking of a generic rational function $F$ is 
 an order of $d+1$ fixed points of $F$. 
The set of the 
dynamical structures of all marked generic rational functions of degree $d$ 
is called the {\em generic virtual  moduli space} 
of degree $d$, and is denoted by $GVM_d$. 
Then as in \cite{M}, there are two canonical projections of $GVM_d$: 
Using the notations as in \cite{FunT} and \cite{T}, 
the {\em Milnor projection}
$\rho : GVM_d \to V {\rm Conf}(d+1, \widehat{\bC})$
maps every point $[F]$ of $GVM_d$ to the 
M\"obius equivalence class of the ordered set of $d+1$ fixed points of $F$, and 
the {\em index decoration}
${\Lambda}: GVM_d \to (\bC^*)^{d+1}$
maps $[F]$ to a $(d+1)$-dimensional vector 
\[
\Lambda([F]) = (\lambda_1([F]), \cdots, \lambda_{d+1}([F])),
\] 
where $\lambda_r([F])$ is  
the dynamical index of $F$ at
 the $r$-th fixed point $p_r$ 
for every $r$. 
The pair $(\rho, {\Lambda})$ of these projections 
 gives a biholomorphic injection of $GVM_d$.

Now, we introduce a similar kind 
of marking and index decoration as above 
for rational functions  with nodes.

\begin{definition}[Reduced index decoration]
{\rm
A {\em marking} of a rational function $({\mathcal  F}, \hat{R})$ with nodes 
is the 
$\partial$-marking $\iota$ of $\hat{R}$,  
or equivalently, the ordered set $\{E_r\}_{r=1}^{A+N}$. 
A marked rational function with nodes is denoted by a triple 
$({\mathcal  F}, \hat{R}, \iota)$, or more precisely, by 
$({\mathcal  F}, \hat{R}, \{E_r\})$.

Next, the {\em reduced index decoration} $\Lambda^\#$ for 
$({\mathcal  F}, \hat{R}, \iota)$ is defined at every puncture of 
$\hat{R}$ as follows: 
If $q\in X(\hat{R})$ is a non-singular puncture of $\hat{R}$, then 
the value of $\Lambda^\#$ at $q$ is the index of 
 $F_q$ at $q$, where $F_q$ is the function in ${\mathcal  F}$ 
corresponding to $q$. 
If $q\in X(\hat{R})$ is a singular bouquet $B$, then 
for every singular puncture $p\in B$, 
the value of $\Lambda^\#$ at $p$ is the coefficients 
$(c_{1}, \cdots c_{L})\in \bC^{L}$ of the principal part 
\[
\frac{c_1}{z} + \cdots +\frac{c_L}{z^L} 
\]
of the Laurent series expansion 
of $1/(z-F_p(z))$ at $p=0$ with the global coordinate $z$ 
of $\bC$ and $L=L(B)$. 
Here, using the cyclic order of punctures induced from the $\partial$-marking 
on the component corresponding to $p$, we take two ordered  punctures ``adjacent" to $p$ 
on the component, and send $p$ and these two punctures to $0,1,\infty$, respectively.

Attaching the vectors defined as above 
to all singular punctures, we obtain a point of 
$\bC^{A + \sum_\ell\, \#(B_\ell)L(B_\ell)}$ 
as the reduced index decoration $\Lambda^{\#}$ 
for  $({\mathcal  F}, \hat{R}, \iota)$,
where $\#(B)$ is the cardinality of $B$.
}
\end{definition}

\begin{remark}
{\rm 
In particular, $c_{1}$ in the above definition 
is the index of $F_p$ at $p=0$. 
(Cf. \cite{FT10} and \cite{T}.) Here recall that 
the indices satisfy the {\em index formula}, i.e., the 
sum of all indices on an ordinary component is $1$. 
Recall that $c_\ell$ except for $c_1$ depend on the choice of punctures 
normalized to be $1$ and $\infty$.
}
\end{remark}

\begin{definition}[Dynamical structure]
{\rm
We say that two marked 
rational functions 
$({\mathcal  F}, \hat{R}, \iota)$ and 
$({\mathcal  G}, \hat{R}', \iota')$ with nodes of type $d$ 
are {\em marking-preserving M\"obius conjugate} if
\begin{enumerate}
  \item
    $[\hat{R}]=[\hat{R}']$ including the $\partial$-markings, i.e., 
    there are M\"obius transformations $T_i$ for all $i$ such that 
    $D_i'=T_i(D_i)$ which preserve the 
    $\partial$-markings and nodes including the order, 
    where  
    ${\mathcal  D}(\hat{R})= \{D_i\}_{i=1}^I$ and  
    ${\mathcal  D}(\hat{R}')=\{D_i'\}_{i=1}^I$, and
  \item
    ${\mathcal  F}=\{F_i\}_{i=1}^I$ is {\em marking-preserving 
    M\"obius conjugate to} ${\mathcal  G}=\{{G_i}\}_{i=1}^I$ by the above 
    $\{T_i\}_{i=1}^I$, 
    i.e., $G_i= T_i\circ F_i\circ T_i^{-1} $
    for every $i$.
\end{enumerate}
A {\em dynamical structure} of a marked 
rational function $({\mathcal  F}, \hat{R}, \iota)$ with nodes 
is the marking-preserving 
M\"obius conjugacy class of it, which
 is denoted by 
$[{\mathcal  F}, \hat{R}, \iota]$. Note that the reduced index decoration 
$\Lambda^{\#}$ is well-defined for the class $[{\mathcal  F}, \hat{R}, \iota]$.

The {\em virtual moduli space of 
rational functions  with nodes of type $d$}
is the set of the dynamical structures of all marked 
rational functions with nodes 
of type $d$,
and is denoted by $\overline{VM}_d $. 
The sublocus of $\overline{VM}_d $ 
corresponding to 
usual rational functions of degree $d$ is called 
the {\em virtual moduli space of degree $d$} and denoted by $VM_d$.
Here, we regard that $\overline{VM}_d $ contains the ``empty" point, 
i.e. the point $\infty=[\emptyset,\emptyset, \emptyset]$.
}
\end{definition}

Finally, we introduce a standard kind of topology on $\overline{VM}_d $. 
For this purpose, we use the {\em reduced realization} 
$\hat{R}^{\#}$ of $\hat{R}$, which is obtained 
from $\hat{R}^{*}$ 
by filling all singular punctures in the same singular bouquet $B$ 
by a single point $q$ for every $B$. Every attached point $q$ 
is called a {\em 
singular node}, which is distinguished 
from other ordinary points, 
even if $B$ consists of a single singular puncture. 
We attach to $\hat{R}^{\#}$ the marking 
induced from that of $\hat{R}$, by replacing singular bouquets to 
singular nodes. 
Note that, even if $\hat{R}^{*}$ is disconnected, 
$\hat{R}^{\#}$ is always connected.

\begin{definition}[Carath\'eodory convergence]
{\rm
We say that points 
$[{\mathcal  F}_k, \hat{R}_k, \iota_k]$ 
converge  to $[{\mathcal  F}, \hat{R}, \iota]$ in $\overline{VM}_d $ 
{\em in the sense of Carath\'eodory} as $k\to \infty$ 
if there is an
admissible sequence of 
continuous surjections 
$f_k:\hat{R}_{k}^{\#} \to \hat{R}^{\#}$ 
such that
$F_{i,k}\circ f_k^{-1}$ converge to $F_i$ spherically 
uniformly on $D_i-U$ for every ordinary 
component $D_i$ of $\hat{R}$ and 
every neighborhood $U$ of $N(\hat{R})\bigcup S(\hat{R})$, 
where
$F_{i,k}$ is a suitable rational function marking-preserving 
M\"obius conjugate to the element in ${\mathcal  F}_k$  
defined on the ordinary component $D_{i,k}$ of $\hat{R}_k$ 
containing $f_k^{-1}(D_i)$, and $S(\hat{R})$ is the set of 
all singular nodes of $\hat{R}$.

Here,  
we say that a sequence 
$\{f_k:\hat{R}_{k}^{\#} \to \hat{R}^{\#} \}$ 
is {\em admissible} if 
\begin{enumerate}
  \item
    $f_k^{-1}$ is a homeomorphism of $D_i$ into an ordinary component 
    of $\hat{R}_k$ 
    for every $k$ and every ordinary component $D_i$ of $\hat{R}$,
  \item
    $f_{k}^{-1}(p)$ is either a non-singular 
    node of $\hat{R}_{k}^*$ 
    or a simple closed curve on an ordinary component of $\hat{R}_k$
    for every $k$ and every non-singular node $p$ of $\hat{R}^*$, 
  \item
    the relative boundary of $f_k^{-1}(\hat{R}^*)$ in $\hat{R}_k^*$ 
    consists of a 
    finite number of non-singular nodes and simple closed curves on 
    ordinary components, and 
    $f_{k}^{-1}(p)$ is a connected component of 
    $\hat{R}_k^{\#}-f_k^{-1}(\hat{R}^*)$ 
    for every $k$ and every $p \in S(\hat{R})$, 
  \item
    the surjection $f_k^\#:X(\hat{R}_k)\to X(\hat{R})$ induced by $f_k$ 
    satisfies that $f_k^\# \circ \iota_k= \iota$ for every $k$, 
    and 
  \item
    for every neighborhood $V$ of the set of all punctures of $\hat{R}$ 
    and every positive $\epsilon$, 
    $f_{k}^{-1}$  is a
    $(1+\epsilon)$-quasiconformal map of $|{\mathcal  D}|(\hat{R}) -V$ 
    for every sufficiently large $k$. 
\end{enumerate}
}
\end{definition}

\begin{definition}[Strong convergence]
{\rm 
We say that 
$[{\mathcal  F}_k, \hat{R}_k, \iota_k]$ 
converge {\em strongly}  to $[{\mathcal  F}, \hat{R}, \iota]$
in $\overline{VM}_d $  as $k\to \infty$ 
if  $[{\mathcal  F}_k, \hat{R}_k, \iota_k]$ 
converge  to $[{\mathcal  F}, \hat{R}, \iota]$ 
 in the sense of Carath\'eodory  as $k\to \infty$  and if 
$[{\mathcal  F}, \hat{R}, \iota]$ is {\em maximal}  
in the sense that, if a subsequence of $[{\mathcal  F}_k, \hat{R}_k, \iota_k]$ 
converges  to another 
$[{\mathcal  G}, \hat{R}', \iota']$ 
in the sense of Carath\'eodory  as $k\to \infty$, then
$[{\mathcal  G}, \hat{R}', \iota']$ is {\em subordinate} to  
$[{\mathcal  F}, \hat{R}, \iota]$,  
i.e., 
we can find a continuous surjection
$\phi: \hat{R}^{\#} \to (\hat{R}')^{\#}$ 
such that
\begin{enumerate}
  \item
    $\phi^{-1}$ is a conformal map 
    of $D_i'$ onto an ordinary component of $\hat{R}$ for every 
    ordinary component $D_i'$ of $\hat{R}'$. 
  \item
    $\phi^{-1}(p)$ is  a non-singular 
    node of $\hat{R}^*$ for  every non-singular node $p$ 
    of $(\hat{R}')^*$,
  \item
    the relative boundary of $\phi^{-1}((\hat{R}')^*)$ in $\hat{R}^*$ 
    consists of a 
    finite number of non-singular nodes and 
    $\phi^{-1}(p)$ is a connected component of 
    $\hat{R}^{\#}-\phi^{-1}((\hat{R}')^*)$ 
    for every $p \in S(\hat{R}')$, and
  \item
    the surjection $\phi^\#:X(\hat{R})\to X(\hat{R}')$ induced by $\phi$ 
    satisfies that 
    $\phi^\# \circ\iota=\iota'$.
\end{enumerate}
}
\end{definition}

\begin{remark}
{\rm 
Roughly speaking, if $[{\mathcal  F}_k, \hat{R}_k, \iota_k]$ in 
$\overline{VM}_d $ 
converge strongly to $[{\mathcal  F}, \hat{R}, \iota]$
 as $k\to \infty$ if and only if 
${\mathcal  F}_k$ converges  to the identity function  
or not, respectively, exactly 
on the crushed components 
or on the ordinary ones of $\hat{R}$.

In general, there might be ``superfluous" singular nodes 
of such $[{\mathcal  G}, \hat{R}', \iota']$ as above.
}
\end{remark}

By using strong convergence, 
we can introduce a topology on  $\overline{VM}_d $, and 
conclude the following result. 
The proofs of all the assertions stated below will be 
given in the next section.

\begin{theorem}\label{thm:1}
$\overline{VM}_d $ is a compact Hausdorff space. 
\end{theorem}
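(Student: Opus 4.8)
The plan is to establish compactness and the Hausdorff property separately, since these are logically independent. For compactness, because the spaces involved are built from moduli of punctured spheres and index data, the natural strategy is to show that $\overline{VM}_d$ is sequentially compact with respect to the strong-convergence topology, and then argue that this topology is second countable (or at least that sequential compactness suffices here). So I would take an arbitrary sequence $[{\mathcal F}_k, \hat{R}_k, \iota_k]$ and extract a subsequence converging strongly to some limit point. The combinatorial data — the partition structure $\{E_r\}$ of $\{1,\dots,d+1\}$, the number $M$ of components, the node set, and the crush data — range over a \emph{finite} set for fixed $d$ (by the constraints $J=M-1\le n-3$ and $A+\sum_\ell L(B_\ell)=n$ with $n=d+1$). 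Passing to a subsequence, I may therefore assume all this combinatorial type is constant along the sequence.

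\emph{The analytic heart of the compactness argument} is then a normal-families / deformation argument. With the combinatorial type fixed, each marked realization $\hat{R}_k^{\#}$ is a sphere with a bounded number of marked points; normalizing three marked points to $0,1,\infty$ on each component via the M\"obius freedom, I would invoke compactness of the moduli of marked configurations (the Deligne--Mumford-type compactness alluded to by the references \cite{B}, \cite{Bu}, \cite{FunT}, \cite{IT}) to get a limiting partially crushed sphere $\hat{R}$, where marked points may collide and pinch off new nodes. Simultaneously the rational functions $F_{i,k}$, having degree bounded by $d$ and fixed points only at punctures with controlled multiplicity (Definition \ref{RFN}(1)), form a normal family on compacta away from the punctures; I would extract spherically-uniform limits $F_i$. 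Where a component gets crushed, the conjugated functions degenerate to the identity, and the surviving data is exactly the principal-part coefficients $(c_1,\dots,c_L)$ of $1/(z-F_p(z))$ — so the reduced index decoration $\Lambda^{\#}$ converges in $\bC^{A+\sum_\ell \#(B_\ell)L(B_\ell)}$ after the normalization fixing the two adjacent punctures to $1,\infty$. The index formula at nodes (Definition \ref{RFN}(2)) and the per-component index formula are closed conditions, so they pass to the limit and confirm the limit is a genuine rational function with nodes. Taking a \emph{maximal} such limit (using the subordination relation to compare competing Carath\'eodory limits) produces the strong limit, and the ``empty'' point $\infty$ absorbs any sequence that degenerates on every component.

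\emph{The main obstacle} I expect is precisely the maximality/subordination step: verifying that among all Carath\'eodory subsequential limits there is a well-defined maximal one, to which all others are subordinate, and that strong convergence is therefore unambiguous. This requires showing the subordination relation is a partial order with a top element on the relevant limit set — essentially that one cannot produce two incomparable ``most-degenerate'' limits. I would handle this by tracking which components are crushed: a component is crushed in the limit exactly when the conjugated functions tend to the identity there, and this is an intrinsic, subsequence-independent property once the normalization is fixed; the maximal limit is the one that crushes the smallest possible set of components, and subordination then corresponds to collapsing additional (``superfluous'') singular nodes. Matching the $\partial$-markings throughout via the induced surjections $f_k^{\#}$ (admissibility condition (4)) keeps the marking coherent under all these identifications.

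For the Hausdorff property, I would show that two distinct dynamical structures can be separated. The pair $(\rho,\Lambda)$ gives a biholomorphic injection on the generic locus, and more generally the combinatorial type together with $(\rho,\Lambda^{\#})$ separates points of $\overline{VM}_d$: if two strong limits coincide as topological data, the convergence of $F_{i,k}\circ f_k^{-1}$ to the respective limits forces the limiting functions to agree on each ordinary component, and the reduced index decorations to agree, so the two limit points are equal. Concretely, suppose a single sequence converged strongly to two points; the admissible surjections to each limit compose to give a conformal identification of the two limits respecting markings, nodes, and $\Lambda^{\#}$, whence they represent the same class. Uniqueness of strong limits is thus exactly the Hausdorff separation, completing the proof.
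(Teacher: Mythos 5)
Your proposal is correct in outline and shares the paper's overall architecture: reduce Theorem \ref{thm:1} to sequential compactness (the paper, like you, dismisses Hausdorffness and second countability as routine), pass to the compactification of the configuration space of the $d+1$ marked fixed points to obtain the limiting nodal sphere, classify components as crushed or ordinary according to whether the functions degenerate to the identity there, and assemble a strong limit. Where you genuinely diverge is in the analytic core. The paper (Lemma \ref{lem:2}) never invokes normal families: it works entirely in the finite-dimensional coordinates given by fixed points and dynamical indices, writing each generic $F_{k}$ via the partial-fraction identity $1/(z-F_k(z))=\sum_j \Lambda_k(j)/(z-p_{k,j})$, so that extracting a convergent subsequence is just compactness of $\widehat{\bC}$ applied to finitely many parameters; the limit functions $F_{D_i'}$ are then \emph{defined} by the limiting partial fractions, crushed components are detected by some $\Lambda(p_r)=\infty$ or some principal-part limit being degenerate, and the reduced index decoration is read off directly from the coefficients $c_{\ell,r}$. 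Your soft approach (degeneration in $\mathbb{P}^{2d+1}$ plus spherical local uniform convergence away from bad points) can be made to work, but note that rational maps of bounded degree are not a normal family per se: you must first pass to a coefficient-convergent subsequence and then justify that the ``holes'' of the limit lie only at punctures, i.e., that degeneration points are limits of fixed points (true, by a Hurwitz argument applied to $z\mapsto z-F_k(z)$, but it is an extra lemma); in the paper's coordinates this is automatic. Two further points of comparison: you handle arbitrary sequences by fixing the combinatorial type via finiteness, whereas the paper proves only the $GVM_d$ case and declares the general case similar; and you explicitly flag and sketch the maximality/subordination verification needed for strong (as opposed to Carath\'eodory) convergence, a step the paper compresses into ``it is easy to see,'' so on that point your treatment is actually more careful than the original.
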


\begin{definition}[Degree]
{\rm
We call the closure of $GVM_d$ in $\overline{VM}_d $ the 
{\em virtual moduli space of 
rational functions  with nodes of degree $d$}, 
and is denoted by $\widehat{VM}_d $. 
We say that a marked rational function 
$({\mathcal  F}, \hat{R},\iota)$ with nodes is of 
{\em degree} $d$ 
for every $[{\mathcal  F}, \hat{R}, \iota]$ in $\widehat{VM}_d $.
}
\end{definition}

\begin{theorem}\label{thm:2}
The virtual moduli space $\widehat{VM}_d $ of 
rational functions  with nodes of degree $d$ 
is compact, and the natural inclusion map
of $GVM_d$ into $\widehat{VM}_d$ 
can be extended to a continuous injection
$\rho: VM_d \to \widehat{VM}_d$ with dense range.
\end{theorem}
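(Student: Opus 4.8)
The plan is to dispatch the three assertions separately, concentrating the effort on the construction and the continuity of $\rho$. Compactness of $\widehat{VM}_d$ is immediate: by definition it is the closure of $GVM_d$ inside $\overline{VM}_d$, hence a closed subset of the compact Hausdorff space furnished by Theorem \ref{thm:1}, and a closed subspace of a compact space is compact. To build $\rho$, I would send a marked usual rational function $F$ of degree $d$, with its $d+1$ fixed points counted with multiplicity and grouped by the marking, to the marked rational function with nodes whose only ordinary component is $\hat{\bC}$ carrying $F$ and which carries, at each fixed point $p$ of multiplicity $m_p\geq 2$, a singular bouquet $\{p\}$ of level $m_p$ realized by a crushed component with $m_p$ non-nodal punctures, the simple fixed points remaining non-singular punctures of level $1$. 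Condition (1) of Definition \ref{RFN} then holds with equality between multiplicity and level, while condition (2) is vacuous for lack of non-singular nodes; by construction the reduced index decoration of $\rho(F)$ is exactly the family of principal parts of $1/(z-F(z))$ at the fixed points, read in the coordinate normalizing $p$ together with its two marking-adjacent punctures to $0,1,\infty$. Since all these invariants are covariant under M\"obius conjugacy, $\rho$ descends to dynamical structures, and for generic $F$ it returns the point $[F]\in GVM_d$, so $\rho$ extends the natural inclusion.

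The core analytic step is to show $\rho(F)\in\widehat{VM}_d=\overline{GVM_d}$. Here I would split each multiple fixed point into distinct simple ones so as to obtain generic $F_k\to F$ uniformly on $\hat{\bC}$, and prove that $[F_k]$ converges \emph{strongly} to $\rho(F)$ by exhibiting admissible surjections $f_k$ that are nearly the identity off the colliding clusters and collapse each cluster to its singular node. Away from the clusters the maps $F_k\to F$ spherically uniformly, which yields the Carath\'eodory convergence on the ordinary component; near each cluster a rescaling centred at the collision point exhibits a crushed component on which the rescaled maps tend to the identity while the $m_p$ coalescing simple fixed points become its non-nodal punctures. The decisive computation is that $1/(z-F_k(z))\to 1/(z-F(z))$ forces the clustered simple principal parts, of the form $\sum_j \lambda_{k,j}/(z-p_{k,j})$, to merge into the order-$m_p$ principal part that $\Lambda^\#$ records at the limiting singular bouquet, so that both the crush data and the higher coefficients $c_2,\dots,c_{m_p}$ pass correctly to the limit. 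Maximality, and hence strong rather than merely Carath\'eodory convergence, holds because crushing occurs exactly at the multiple fixed points of $F$ and at no other point of $\hat{\bC}$.

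Continuity of $\rho$ follows from the same mechanism: given $F^{(j)}\to F$, it suffices, $\widehat{VM}_d$ being compact Hausdorff, to identify the limit of any convergent subsequence of $\rho(F^{(j)})$, and the continuous dependence of the fixed-point configuration (the Milnor data) and of the principal parts of $1/(z-F^{(j)}(z))$ pins that limit down to $\rho(F)$, even when fixed points of the $F^{(j)}$ coalesce further in passing to $F$. Injectivity I would obtain by reconstruction: the Milnor data recover the locations and multiplicities of the fixed points, the reduced index decoration recovers the full principal part of $1/(z-F(z))$ at each of them, and since $1/(z-F(z))$ has poles only at the fixed points and, after normalizing $\infty$ to be non-fixed, vanishes at $\infty$, it equals the sum of those principal parts; this determines $z-F(z)$ and hence $F$ up to the marking-preserving M\"obius conjugacy built into the normalization. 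Finally the range is dense for free, since $\rho$ extends the inclusion and so $\rho(VM_d)\supseteq GVM_d$, whose closure is by definition all of $\widehat{VM}_d$.

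I expect the real obstacle to be the strong-convergence analysis of the second paragraph---controlling the rescaling at each colliding cluster so that the crush data and the higher principal-part coefficients converge to the intended limit, and verifying the maximality demanded by strong convergence---whereas the compactness, density, and reconstruction steps are essentially formal.
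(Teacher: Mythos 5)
Your proposal is correct and follows essentially the same route as the paper: compactness of $\widehat{VM}_d$ as a closed subset of the compact space $\overline{VM}_d$ from Theorem \ref{thm:1}; the map $\rho$ sending $[F]$ to $[\{F\},\hat{R},\iota]$ with each multiple fixed point becoming a singular puncture of level equal to its multiplicity (this is exactly the paper's Lemma \ref{lem:3}); and the key step that $\rho([F])$ lies in $\widehat{VM}_d$ proved by approximating with generic maps obtained by splitting multiple fixed points while controlling the principal parts of $1/(z-F(z))$ (this is the paper's Lemma \ref{lem:1}, where your ``decisive computation'' on merging principal parts is carried out explicitly via the Vandermonde-matrix inversion guaranteeing non-zero split indices). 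The only difference is organizational: the paper factors the argument through Lemmas \ref{lem:1} and \ref{lem:3} and, like you, treats continuity, injectivity, and density as essentially formal.
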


Here, we note the following fact.

\begin{lemma}\label{lem:1}
Every marked rational function $({\mathcal  F}, \hat{R}, \iota)$ 
with nodes of type $d$ such that 
the realization $\hat{R}^*$ is connected is of degree $d$.
\end{lemma}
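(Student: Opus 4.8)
The plan is to realize $[{\mathcal F},\hat R,\iota]$ as a strong limit of generic rational functions of degree $d$, which is exactly what membership in $\widehat{VM}_d=\overline{GVM_d}$, i.e.\ being of degree $d$, means. I would first record the structure imposed by connectedness. Since $\hat R^*$ is connected, the Remark following the definition of the realization shows that every singular bouquet $B_\ell$ consists of a single singular puncture $p_\ell$, of level $L_\ell=L(B_\ell)\ge2$. Moreover the dual graph whose vertices are the ordinary components $D_1',\dots,D_I'$ and whose edges are the pairs in $N(\hat R)$ is a connected tree (the realization has arithmetic genus zero, so $0=(I-1)-I+1$ forces it), whence $\#N(\hat R)=I-1$. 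Writing $q_1,\dots,q_A$ for the non-nodal non-singular punctures, the crush-data identity becomes $A+\sum_\ell L_\ell=n=d+1$, and the $\partial$-marking $\iota$ partitions $\{1,\dots,d+1\}$ into the blocks $E_r=\iota^{-1}(q_r)$, of size $1$ over each $q_r$ and of size $L_\ell$ over each $p_\ell$.

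The construction would then combine two degenerations in a single small parameter $t$. First, I would open each non-singular node by the standard plumbing: in local coordinates $\zeta,w$ centred at the paired punctures $p\in D_i'$ and $p'\in D_j'$ I excise small disks and glue the collars by $\zeta w=t$; since the dual tree is contracted, the result is a single sphere $\hat{\bC}_t$ degenerating to $\hat R^*$ as $t\to0$. The \textbf{index formula at nodes}, condition (2) of Definition \ref{RFN}, is exactly what makes the two local dynamics glue across the neck. When $p,p'$ are both fixed, the identity $1/(1-\lambda)+1/(1-\lambda')=1$ forces $\lambda'=\lambda^{-1}$, so the linearizations $\zeta\mapsto\lambda\zeta$ and $w\mapsto\lambda^{-1}w$ are consistent under $\zeta w=t$ and extend to a single holomorphic map on the collar having no fixed point there; the remaining case, one index $1$ and one index $0$, is analogous and again yields a fixed-point-free neck. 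Second, I would un-crush each singular puncture $p_\ell$ by opening the singular node in the same fashion and reinserting a small component carrying a near-identity map whose reduced index decoration matches $\Lambda^\#$ at $p_\ell$; the second additional condition of Definition \ref{crush} guarantees $L_\ell\ge2$ non-nodal punctures on that component, giving room for the multiplicity-$\le L_\ell$ fixed point of $F$ at $p_\ell$ to split into $L_\ell$ simple fixed points as $t\to0$. Interpolating the given $F_i$ with these local models, and invoking the gluing technology of \cite{FunT} and \cite{T}, produces for each small $t$ a rational map $R_t$ on $\hat{\bC}_t\cong\hat{\bC}$ whose fixed points are one near each $q_r$, $L_\ell$ near each $p_\ell$, and none in the necks. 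Counting them gives $A+\sum_\ell L_\ell=d+1$ fixed points, which are simple after a harmless generic perturbation, so $R_t$ has degree $d$ and defines a point of $GVM_d$.

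Finally I would check convergence. Let $f_k=f_{t_k}\colon\hat R_{t_k}^{\#}\to\hat R^{\#}$ be the collapsing maps of the plumbing and un-crushing, contracting each neck to its non-singular node and each reinserted component to its singular node. Properties (1)--(5) of admissibility hold by construction, and the required local uniform convergence $F_{i,k}\circ f_k^{-1}\to F_i$ off any neighbourhood of $N(\hat R)\cup S(\hat R)$ is precisely the statement that $R_{t_k}$ restricts to $F_i$ on the bulk of each ordinary component and to the identity on the shrinking crushed ones; this yields Carath\'eodory convergence $[{\mathcal F}_k,\hat R_k,\iota_k]\to[{\mathcal F},\hat R,\iota]$. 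Maximality, hence strong convergence, follows because the limit already exhibits every component of $\hat R$ as ordinary and $\hat R^*$ as connected, so no further room for degeneration remains: any Carath\'eodory sub-limit $[{\mathcal G},\hat R',\iota']$ admits the collapsing surjection $\phi\colon\hat R^{\#}\to(\hat R')^{\#}$ witnessing subordinacy. The main obstacle is the middle step, namely producing an honest global \emph{rational} map $R_t$ of exactly degree $d$ that simultaneously glues all the $F_i$ across the necks and carries the prescribed near-identity models on the un-crushed pieces, while ensuring genericity and, above all, that no fixed point escapes into a neck; such an escaping fixed point would converge to a node outside $X(\hat R)$ and destroy the degree and marking bookkeeping. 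This is exactly where condition (2) of Definition \ref{RFN} and the second additional condition of Definition \ref{crush} enter essentially.
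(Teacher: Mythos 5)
Your overall strategy --- exhibiting $[{\mathcal F},\hat R,\iota]$ as a strong limit of points of $GVM_d$ by simultaneously splitting the singular punctures into simple fixed points and reopening the non-singular nodes --- is the same as the paper's, and your structural observations (singleton bouquets, tree dual graph, the count $A+\sum_\ell L_\ell=d+1$) are correct. But there is a genuine gap exactly where you flag ``the main obstacle'': you never construct the maps $R_t$. The plumbing picture only produces local models that are \emph{compatible} across the necks (your reciprocal-multiplier computation); turning a family of locally defined holomorphic maps into an honest global rational map of degree exactly $d$ is not supplied by \cite{FunT} or \cite{T}, which concern configuration spaces and coordinate systems on spaces of rational functions, not gluing of dynamics. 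The paper avoids this local-to-global problem entirely by never gluing at singular punctures: a generic rational function is determined by its fixed points and indices through the partial-fraction identity $\frac{1}{z-F(z)}=\sum_j\frac{\lambda_j}{z-p_j}$, so the approximating functions on each component are simply \emph{written down} with prescribed fixed points and indices; in particular no fixed point can ``escape into a neck,'' because the fixed points are exactly the prescribed poles.

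The second missing ingredient is the quantitative heart of the paper's proof. For your approximants to converge to the given point (and not to some other point with the same combinatorics), the split fixed points $\epsilon_1,\dots,\epsilon_{L_\ell}$ near each singular puncture must carry \emph{non-zero} indices $\lambda_\nu$ with
\[
\sum_{\nu=1}^{L_\ell}\frac{\lambda_\nu}{z-\epsilon_\nu}\;\longrightarrow\;
\frac{c_1}{z}+\cdots+\frac{c_{L_\ell}}{z^{L_\ell}},
\]
the prescribed reduced index decoration. You assert that a model ``whose reduced index decoration matches $\Lambda^{\#}$'' exists, but proving this is precisely the paper's computation: the numerator coefficients of $\sum_\nu\lambda_\nu/(z-\epsilon_\nu)$ depend linearly on $(\lambda_\nu)$ via the matrix $M$ of elementary symmetric functions of the $\epsilon_\nu$, whose determinant is $\prod_{\mu<\nu}(\epsilon_\mu-\epsilon_\nu)\neq 0$; inverting $M$ explicitly and perturbing the target coefficients slightly yields solutions with all $\lambda_\nu\neq 0$ (non-vanishing is essential, since a vanishing index deletes a fixed point and drops the degree). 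Without this step your degree count and your convergence claim are both unjustified. By contrast, your treatment of the non-singular nodes is not where the difficulty lies: the paper, too, disposes of that part as a ``standard surgery by reopening non-singular nodes.''
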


\begin{example}\label{exm:1}
{\rm 
Suppose that points $[P_k]$ in $VM_d$ represent 
the classes of polynomials $P_k$ of degree $d$, 
and $\rho([P_k])$ converge to 
$[{\mathcal  F}, \hat{R},\iota]$ in $\widehat{VM}_d$. Then,  
every $F_m$ in ${\mathcal  F}$ defined on an ordinary component 
$D_m$ of $\hat{R}$ is either 
M\"obius conjugate to a polynomial or to a constant.

In \cite{FT08}, we use ${\mathcal  F}$ only to 
define the boundary point of such a sublocus of $VM_d$. 
Here, we also take the 
binding manner of them into 
account as $\hat{R}$. Hence the boundary of it 
constructed in this paper is 
larger than that in \cite{FT08}, and actually 
a finite branched cover of that.
Also note that the realization of $\hat{R}$ is always connected 
in this case.
}
\end{example}

\begin{proposition}\label{prop:1}
If $d\leq 5$ then  $\overline{VM}_d = \widehat{VM}_d$.
\end{proposition}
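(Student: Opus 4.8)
The plan is to pin down the single obstruction to ``type $d \Rightarrow$ degree $d$'' and then to remove it by hand for $d\le 5$. Since $\widehat{VM}_d\subseteq\overline{VM}_d$ always, the asserted equality is equivalent to the statement that every marked rational function with nodes of type $d$ lies in $\widehat{VM}_d=\overline{GVM_d}$. By Lemma \ref{lem:1} this already holds whenever the realization $\hat R^{*}$ is connected, so the whole problem reduces to the case in which $\hat R^{*}$ is \emph{disconnected}. The first step is the elementary remark that $\hat R^{*}$ is disconnected precisely when the graph $\Gamma$ with vertices the ordinary components of $\hat R$ and edges the pairs of $N(\hat R)$ is disconnected; because the reduced realization $\hat R^{\#}$ is always connected, some singular bouquet $B$ must then join two distinct components of $\Gamma$, and in particular $\#(B)\ge 2$.

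The second step is a counting argument. Such a $B=B(D)$ arises from a crushed component $D$ carrying, by the second condition in Definition \ref{crush}, at least two non-nodal punctures, so $L(B)=L(D)\ge 2$, and meeting at least two ordinary components lying in different pieces of $\Gamma$. Each such component is an $n_m\,(\ge 3)$-punctured sphere containing exactly one singular puncture of $B$, hence at least two further punctures; a short case analysis (any puncture spent on an edge of $N(\hat R)$ only drags in a further vertex, and hence further punctures) shows that the total non-nodal count $n=A+\sum_\ell L(B_\ell)$ is smallest when $\Gamma$ has no edges at all, in which case the two components contribute $A\ge 2+2=4$. The crush-data identity $A+\sum_\ell L(B_\ell)=d+1$ then forces $d+1\ge 4+2=6$, i.e. $d\ge 5$. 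Thus for $d\le 4$ the realization $\hat R^{*}$ is always connected and Lemma \ref{lem:1} finishes the proof, while for $d=5$ the same count pins the disconnected case down to a single combinatorial type: two $3$-punctured ordinary components $D_1',D_2'$ with $N(\hat R)=\emptyset$, one $4$-punctured crushed component $D$, one singular bouquet $B=\{p_1,p_2\}$ of size $2$ and level $2$, and $A=4$ non-nodal punctures (every variant---a larger bouquet, a second bouquet, or any edge of $N(\hat R)$---is seen by the same inequality to need $d\ge 6$).

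It remains to realize this one type, for arbitrary $\mathcal F=\{F_1,F_2\}$ and arbitrary prescribed reduced index decoration at $p_1,p_2$, as a strong limit of generic quintics. I would argue in the Milnor coordinates $(\rho,\Lambda)$ of \cite{M}, \cite{FunT}, in which a generic quintic is recorded by its six ordered fixed points together with their six indices subject only to $\sum_r\lambda_r=1$. The construction chooses a one-parameter family of such data in which the six fixed points split into three pairs: two pairs stay bounded and converge to the two fixed points of $F_1$ and to the two of $F_2$ (after the rescalings that separate $D_1'$ from $D_2'$), while the third pair collides, pinching off the curve that becomes the singular node $B$. The indices are steered so that on each ordinary pair they converge to the indices of the corresponding $F_i$---recovering the per-component index formula $\sum=1$---and so that the colliding pair produces exactly the prescribed Laurent coefficients $(c_1,c_2)$ of $1/(z-F_{p_i}(z))$ at $p_i$. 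One then writes down the admissible maps $f_k:\hat R_k^{\#}\to\hat R^{\#}$ collapsing this curve and checks that the rescaled quintics converge spherically uniformly to $F_1$, to $F_2$, and to the identity on compacta of $D_1'$, $D_2'$, and the crushed neck $D$ respectively, which is precisely strong convergence to $[\mathcal F,\hat R,\iota]$.

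The step I expect to be the real obstacle is this last realization, and within it the bookkeeping at the level-$2$ singular node. The limiting functions $F_1,F_2$ may have degree as large as $3$, yet the collapsing neck map degenerates to the identity, so the way the degree $5$ of the approximants is absorbed by the crushed component is invisible on the limit and has to be tracked through the two colliding multipliers, whose indices blow up with a finite, prescribed sum dictated by the index formula. Matching that blow-up to the recorded data $(c_1,c_2)$, distributing the two fixed points carried by $D$ correctly between the two necks, and simultaneously ruling out any superfluous singular node so that the limit is genuinely maximal, is the delicate point; it is also exactly the place where the inequality becomes critical at $d=6$, which is why the statement stops at $d=5$. Once this is in hand, every type-$5$ function lies in $\widehat{VM}_5$, which is the asserted equality $\overline{VM}_5=\widehat{VM}_5$.
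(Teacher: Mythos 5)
Your first two steps are essentially the paper's (and in fact more detailed: the paper dismisses the combinatorics with ``it is easy to see'' and ``there is essentially only one possibility''), but note one flaw in the uniqueness claim at $d=5$: the variant with \emph{two} level-$2$ bouquets joining the same pair of $3$-punctured ordinary components also satisfies your count, since $A+\sum_\ell L(B_\ell)=2+2+2=6$; it is excluded not by ``the same inequality'' but by the tree constraint $J=M-1$ on $\hat S$ (four nodes versus $J=3$), which your argument never invokes.

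The genuine gap is the realization step, which you yourself flag as ``the real obstacle'' and leave as a plan rather than a proof; but that step \emph{is} the proof in the paper -- everything before it is two sentences there. Concretely, given an arbitrary point $[{\mathcal F},\hat R,\iota]$ of the unique disconnected type with reduced index decorations $(\lambda_1^j,\lambda_2^j;(c_1^j,c_2^j))$ on $D_j$, the paper exhibits explicit generic quintics
\[
\frac{1}{z-F_k(z)} = \frac{\tilde\lambda_{k,1}^1}{z+2}+\frac{\tilde\lambda_{k,2}^1}{z+2-\epsilon_{k,1}}+\frac{\tilde\lambda_{k,1}^2}{z-2}+\frac{\tilde\lambda_{k,2}^2}{z-2-\epsilon_{k,2}}+\frac{\tilde c_{k,1}}{z}+\frac{\tilde c_{k,2}}{z-1},
\]
in which \emph{both} ordinary pairs collide (at $\mp2$, at rates $\epsilon_{k,1},\epsilon_{k,2}$) while the crushed pair $\{0,1\}$ stays separated with indices $\tilde c_{k,\ell}\to\infty$; your description (two pairs bounded and separated, one pair colliding) cannot produce the chain $D_1'$--$D$--$D_2'$, which needs two necks in any fixed normalization. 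The delicate point you defer is then resolved as follows: after rescaling by $S_{k,j}(z)=\mu_{k,j}\,z/(z-(-1)^j2)$, the second Laurent coefficient at the singular puncture of $D_j$ appears as $c_2^j\approx-\tilde c_{k,1}S_{k,j}(1)$, so the \emph{single} blowing-up parameter $\tilde c_{k,1}$ must match $c_2^1$ and $c_2^2$ simultaneously; the paper achieves this by setting $\epsilon_{k,j}=a_k^j/k^2$ and solving $b_{k,1}/S_{k,1}(1)=b_{k,2}/S_{k,2}(1)$ for the $a_k^j$, with $b_k^j=-c_2^j$ (or $1/k$ when $c_2^j=0$). This solvability, not your counting inequality ``becoming critical,'' is what separates $d=5$ from $d=6$: for level-$3$ bouquets the analogous system is overdetermined, which is exactly the contradiction in Example \ref{exm:2}. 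Without carrying out this matching (or some surjectivity argument onto the data $(\lambda_\nu^j,c_2^j)$), your proposal proves the proposition only for $d\le 4$.
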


Finally, we will give an example (Example \ref{exm:2}) in the next section 
which shows that $\overline{VM}_d - \widehat{VM}_d$ 
is non-empty for every $d\geq 6$. 

\begin{problem}
{\rm
For $d\geq 6$, find explicit conditions 
for rational function with nodes of type $d$ to be of degree $d$.
}
\end{problem}

\section{Proofs}

\begin{proof}[Proof of Theorem \ref{thm:1}]\ 
It is easy to see that $\overline{VM}_d $ is 
Hausdorff and satisfies the 
second countability axiom, and hence it suffices to show  sequential 
compactness of it.
Thus, the next lemma implies the assertion. 
\end{proof}

\begin{lemma}\label{lem:2}
Let 
$[{\mathcal  F}_k, \hat{R}_k, \iota_k]$ be a sequence 
in $\overline{VM}_d $. 
Then we can find a subsequence which 
converges in $\overline{VM}_d$.
\end{lemma}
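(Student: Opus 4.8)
The plan is to establish sequential compactness by extracting, from any sequence $[{\mathcal F}_k,\hat R_k,\iota_k]$, a subsequence that converges strongly. The natural strategy is to run two nested compactness arguments: one for the underlying geometry (the partially crushed punctured spheres $\hat R_k$) and one for the dynamics (the families of rational functions ${\mathcal F}_k$), and then to select a \emph{maximal} limit so that the strong-convergence requirement is met. First I would reduce the combinatorial data. Since every $\hat R_k$ arises from a level-$(d+1)$ partial crush, the number of ordinary components $I$, the number of crushed components, the node count $J$, and the crush data $\{(B_\ell,L(B_\ell))\}$ all take values in a finite set bounded in terms of $d$ (the constraints $\sum n_m=2J+n$, $J=M-1\le n-3$, and $A+\sum_\ell L(B_\ell)=n$ with $n=d+1$ force finiteness). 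Passing to a subsequence, I may therefore assume the entire combinatorial type of $\hat R_k$, together with the marking $\iota_k$ regarded as the ordered partition $\{E_r\}$ of $\{1,\dots,d+1\}$, is constant in $k$.

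With the combinatorial type fixed, the next step is to normalize each ordinary component. On each $D_{i,k}'$ I choose the marking-preserving M\"obius representative $F_{i,k}$ so that three distinguished marked punctures (using the cyclic order from the $\partial$-marking) are sent to $0,1,\infty$; this is exactly the normalization used in the definition of the reduced index decoration, and it pins down the M\"obius ambiguity. The remaining punctures then lie in $\hat{\bC}$, and after a further subsequence their positions converge in $\hat{\bC}$ (possibly colliding, which is precisely the mechanism that creates new nodes in the limit). Simultaneously, each normalized $F_{i,k}$ is a rational map of degree bounded by $d$, so its coefficients lie in a fixed projective space; passing once more to a subsequence, $F_{i,k}$ converges in the space of rational maps of degree $\le d$. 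I would record carefully the two qualitatively different limiting behaviours predicted by the remark: on a component where the limit map is non-constant and non-identity it survives as an ordinary component of the limit $\hat R$, while on a component where $F_{i,k}\to\mathrm{id}$ the component is \emph{crushed}, contributing a singular bouquet whose level is governed by how the fixed-point multiplicities accumulate. Verifying that the limiting object genuinely satisfies Definition \ref{RFN} — in particular condition (1), that fixed points sit only at punctures with multiplicity bounded by the level, and condition (2), the index formula $\lambda+\lambda'=1$ at each node — is where I would spend the most care; the index formula in the limit should follow from continuity of the residue/index under the chosen normalization together with the standard global index identity (sum of indices $=1$) on each component.

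Having produced \emph{some} Carath\'eodory limit $[{\mathcal G},\hat R',\iota']$ by the above extraction, the final and most delicate step is to upgrade it to a \emph{strong} limit. Carath\'eodory convergence alone only yields a subordinate limit; strong convergence additionally requires maximality. I would argue that among all Carath\'eodory limits of subsequences there is a maximal one: the partial order ``subordinate to'' is governed by the finite combinatorial data (number of ordinary components, node structure, crush data), so a limit is maximal precisely when it crushes as few components as possible and introduces no superfluous singular nodes. Concretely, I would pass to a diagonal subsequence realizing the largest possible collection of surviving ordinary components (equivalently, the finest admissible decomposition $f_k^{-1}(\hat R^*)$), and then check that any further Carath\'eodory limit of a subsequence admits the required continuous surjection $\phi:\hat R^\#\to(\hat R')^\#$ satisfying conditions (1)--(4) of subordination. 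The construction of the admissible maps $f_k:\hat R_k^\#\to\hat R^\#$ themselves I would assemble from the convergence data: on surviving components $f_k^{-1}$ is the near-identity $(1+\epsilon)$-quasiconformal map matching the normalizations, each pinching annulus collapses to a non-singular or singular node as dictated by whether a neighbouring component is crushed, and the compatibility $f_k^\#\circ\iota_k=\iota$ holds because the marking was fixed at the outset. The main obstacle, then, is not the extraction but the maximality argument — ensuring that the chosen subsequence does not spuriously crush a component that could have survived, and that the collapsing annuli are correctly sorted into singular versus non-singular nodes so that the limit lies in $\overline{VM}_d$ rather than some degenerate over-crushed configuration.
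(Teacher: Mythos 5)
Your outline has the same skeleton as the paper's proof---pass to a subsequence in which the underlying spheres converge, pass again so the function data converge, classify components as crushed or ordinary according to whether the limit degenerates to the identity, then argue maximality---but it omits the one tool that makes every verification you defer actually tractable, and the substitute you propose does not supply it. The paper runs the entire extraction through the fixed-point/index parametrization of a generic rational function, $1/(z-F_{k}(z))=\sum_{j}\Lambda_k(j)/(z-p_{k,j})$. With this, compactness of the dynamical data reduces to compactness of finitely many index parameters in $\widehat{\bC}$ (the value $\infty$ being allowed); the limit function on each component is \emph{defined} by the limiting partial fraction $\sum_{\ell}c_{\ell,r}/(z-p_r)^{\ell}$; ``crushed'' means by definition that some limiting coefficient equals $\infty$; the reduced index decoration of the limit is read off from the same coefficients; and spherically uniform convergence away from the limit punctures is automatic, because the only poles of $1/(z-F_k)$ are at fixed points. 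Your substitute---compactness of the coefficient space $\mathbb{P}^{2d+1}$ of rational maps of degree $\le d$---does none of this: a coefficient-space limit may acquire common factors, so convergence fails near the resulting ``holes,'' and you would need a separate argument (e.g.\ an argument-principle count) showing holes occur only at limits of fixed points before the uniform convergence demanded by Carath\'eodory convergence can hold; moreover the data that actually constitute a point of $\overline{VM}_d$---the bouquet levels, the Laurent coefficients $(c_1,\dots,c_L)$ at singular punctures, and the index formula at nodes in Definition \ref{RFN}---are invisible in the coefficient limit and would have to be reconstructed from scratch.

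The second gap is geometric. After normalizing three marked punctures to $0,1,\infty$, you let the remaining punctures ``converge, possibly colliding, which creates new nodes.'' That is not enough: when three or more punctures collide at different rates, the limiting noded sphere is a tree of spheres obtained by iterated rescaling, and the naive limit sees only the root of that tree, so you obtain neither the components on which the limit functions live, nor the levels of the singular bouquets, nor a well-defined candidate $\hat{R}$. The paper sidesteps this by invoking compactness of the standard compactification $\widehat{V\mathrm{Conf}}(d+1,\widehat{\bC})$ of the configuration space and taking $\hat{S}$ to be the limit there; your argument needs that citation or a reconstruction of the rescaling analysis. Finally, once these two gaps are filled, your ``diagonalize to the finest decomposition'' step for maximality becomes unnecessary: after passing to a subsequence in which the configuration and all index data converge, the crushed/ordinary classification is forced, so the constructed limit is automatically maximal. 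What does require checking---and is the one point the paper verifies explicitly but your outline omits---is that every crushed component of $\hat{S}$ carries at least two non-nodal punctures (otherwise no adjacent component could be ordinary), so that the limit is a genuine partial crush in the sense of Definition \ref{crush}.
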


\begin{proof}
We prove the assertion only for the case that 
all $[{\mathcal  F}_k, \hat{R}_k, \iota_k]$ 
belong to $GVM_d$, 
and hence in particular,  
${\mathcal  F}_k$ and $\hat{R}_k$ consist of 
a single function $F_{k,1}$ and 
a single component $D_{k,1}$, respectively, for every $k$.  
The general cases can be treated similarly. 
Also, taking a subsequence if necessary,
 we may assume that $[\hat{R}_k]$ converge  to the point  
corresponding to a $\partial$-marked 
$(d+1)$-punctured Riemann sphere $\hat{S}=({\mathcal D}(\hat{S}), N(\hat{S}))$  
with nodes in the 
standard compactification 
$\widehat{V{\rm Conf}}(d+1, \widehat{\bC})$ of 
$V {\rm Conf}(d+1, \widehat{\bC})$. 
(Cf., for instance, \cite{B}, \cite{Bu},  and \cite{FunT}.)

Fix a component $D_m$ in ${\mathcal D}(\hat{S})$, and 
let $\{p_r\}$ 
be the set of all punctures of $D_m$. 
Here, we may assume that $\{p_r\}\subset \bC$. 
Furthermore, by taking a suitable representatives 
of $D_{k,1}$ 
and  a subsequence if necessary, 
we may assume 
that $j$-th punctures $p_{k,j}$ of $D_{k,1}$ 
converge to one of $p_r$ for every $j$.
 Let $Y(p_r)$ be the set of all $j$ 
such that $p_{k,j}$ converge to $p_r$ 
and $L'(p_r)$ is the number of such $j$  for every $p_r$. 

Again taking a subsequence if necessary, 
we may assume that, if $L'(p_r)=1$,  then  
the indices $\Lambda_k(j)$ of $F_{k,1}$ at $p_{k,j}$ 
converge to 
a value, say $\Lambda(p_r)$, in $\widehat{\bC}$,
where $j$ is the unique element of $Y(p_r)$, 
 and if $L'(p_r)>1$, then 
\[
\sum_{j\in Y(p_r)}\, \frac{\Lambda_k(j)}{z-p_{k,j}}\quad \mbox{tend to}\quad 
 \sum_{\ell=1}^{L'(p_r)}\, \frac{c_{\ell,r}}{(z-p_r)^\ell}
\] 
with respect to the global coordinate $z$ on $D_m$,
where 
some of $c_{\ell,r}$ might be $\infty$. We define 
 a rational function $F_{p_r}$ by setting 
\[
\frac{1}{z-F_{p_r}(z)}=
 \sum_{\ell=1}^{L'(p_r)}\, \frac{c_{\ell,r}}{(z-p_r)^\ell},
\]
where we regard that $F_{p_r}$ is the identity function $id.$ 
when some of $c_{\ell,r}$ are $\infty$. 

If there exists either $\infty$ among $\Lambda(p_r)$ 
or $id.$ among $F_{p_r}$ on $D_m$, then we classify 
the component $D_m$ as
a crushed one, and if not, as an ordinary one. 
Note that this classification does not depend 
on the representative of $D_m$. Hence we can define
 canonically 
a $\partial$-marked partially crashed $(d+1)$-punctured 
Riemann sphere $[\hat{S},\hat{R}_0]$  with nodes, 
where 
$\hat{R}_0=({\mathcal  D}(\hat{R}_0), N(\hat{R}_0))$ with 
${\mathcal  D}(\hat{R_0})=\{D_i'\}$ consisting of 
all ordinary components in this classification.  
Then, 
every crushed component should contain at least two punctures 
of $\hat{S}$, for if not, no components adjacent to it can be 
ordinary. 
Also, it is easy to construct an 
admissible family of continuous surjections 
$f_k:\hat{R}_k^\#=\hat{R}_k\to \hat{R}_0^{\#}$ such that 
$F_{k,1}\circ f_k^{-1}$ converges to a rational function $F_{D_i'}$ 
defined by
\[
\frac{1}{z-F_{D_i'}(z)}= \sum_{L'(p_r)=1}\, 
\frac{\Lambda(p_r)}{z-p_r} + 
\sum_{L'(p_r)>1}\, \frac{1}{z-F_{p_r}(z)}
\]
 locally uniformly on every $D_i'$, where $\{p_r\}$ is 
as above with $D_m=D'_i$. 

Finally, let ${\mathcal  F}$ be the set of all these $F_{D_i'}$ 
on ordinary components $D_i'$ of $\hat{R}_0$. 
Then it is easy to see that 
$[{\mathcal  F}_k, \hat{R}_k, \iota_k]$ converge strongly to  
$[{\mathcal  F}, \hat{R}_0, \iota]$ 
with naturally induced marking $\iota$, 
and we have the assertion.
\end{proof}

Next, we note the following fact.

\begin{lemma}\label{lem:3}
The natural inclusion map of 
$GVM_d$ into $\widehat{VM}_d$ 
can be extended canonically 
to a continuous injection $\rho :VM_d \to \widehat{VM}_d$. 
\end{lemma}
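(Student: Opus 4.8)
The plan is to define $\rho$ explicitly by recording, at each multiple fixed point, the crush structure that a coalescing family of simple fixed points produces, and then to verify that this map is continuous and injective, continuity being the essential point. Let $[R,\sigma]\in VM_d$, so $R$ has degree $d$ and $\sigma$ orders the $d+1$ fixed points of $R$ counted with multiplicity. Grouping the markers by the fixed point they name, a simple fixed point receives one marker and a fixed point $p$ of multiplicity $m\ge 2$ receives a block of $m$ markers. I associate the marked rational function with nodes whose single ordinary component is $\hat{\bC}$ carrying $F=R$, with a crushed $(m+1)$-punctured sphere attached at each such $p$, bearing the $m$ markers of its block as non-nodal punctures; thus $p$ becomes a singular puncture of level $m$ forming a singular bouquet by itself, and $\iota$ is induced by $\sigma$. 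Since $R$ is not the identity and has at each puncture a fixed point of multiplicity equal to its level, the conditions of Definition \ref{RFN} hold (condition (2) being vacuous, as $N(\hat{R})=\emptyset$), and the realization $\hat{R}^*=\hat{\bC}$ is connected, so Lemma \ref{lem:1} places $\rho([R,\sigma])$ in $\widehat{VM}_d$. On the generic locus there is no coalescence and hence no crushing, so $\rho$ restricts to the given inclusion of $GVM_d$; this is the canonical extension sought.

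For continuity, suppose $[R_k,\sigma_k]\to[R,\sigma]$ in $VM_d$; after normalizing representatives I may assume $R_k\to R$ uniformly on $\hat{\bC}$ with the ordered fixed points of $R_k$ converging, those in the block of a multiple fixed point $p$ of $R$ coalescing to $p$. As $\widehat{VM}_d$ is first countable, it suffices that every subsequence of $\rho([R_k,\sigma_k])$ have a sub-subsequence tending to $\rho([R,\sigma])$. Lemma \ref{lem:2} supplies a strongly convergent sub-subsequence, and the construction in its proof identifies the limit: away from the fixed points $1/(z-R_k(z))\to 1/(z-R(z))$ locally uniformly, so the ordinary function is $R$, while at each multiple fixed point $p$ the individual indices blow up, the multipliers tending to $1$, which forces the coalescing points onto a crushed bubble of level $m$ whose reduced index decoration (in the normalization of the definition) is the principal part of $1/(z-R(z))$ at $p$. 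This is precisely $\rho([R,\sigma])$, and since the outcome depends only on $R$ and the multiplicity pattern of its fixed points it is independent of the chosen subsequence; hence $\rho$ is continuous.

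Finally, injectivity is immediate from the description: if $\rho([R,\sigma])=\rho([R',\sigma'])$, a marking-preserving M\"obius conjugacy of the two objects restricts, on the single ordinary component, to a M\"obius $T$ with $R'=T\circ R\circ T^{-1}$ carrying the ordered fixed points of $R$ to those of $R'$, so $[R,\sigma]=[R',\sigma']$ in $VM_d$. I expect the main obstacle to be the continuity step, and within it the precise identification of the Lemma \ref{lem:2} limit with $\rho([R,\sigma])$: one must use the non-degeneracy of $R$ as a genuine degree-$d$ map to guarantee that in the standard compactification the fixed-point configuration degenerates \emph{only} by coalescence at the multiple fixed points, with no spurious pinching of the main component and no extraneous crushing, so that the ordinary part is exactly $\hat{\bC}$ and the crush data are dictated solely by the fixed-point multiplicities of $R$.
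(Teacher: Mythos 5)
Your proposal is correct and follows essentially the same route as the paper: you define $\rho([F])$ exactly as the paper does (a single ordinary component carrying $F$, with each multiple fixed point becoming a singular puncture of level equal to its multiplicity), and you invoke Lemma \ref{lem:1} via connectedness of the realization $\hat{R}^*$ to place the image in $\widehat{VM}_d$. The only difference is one of detail: the paper dismisses continuity and injectivity as immediate from the definition of the topology, whereas you supply the subsequence argument via Lemma \ref{lem:2} and the identification of the strong limit, which is a reasonable elaboration of the same idea rather than a different method.
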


\begin{proof} 
Every point $[F]$ in $VM_d-GVM_d$  corresponds such an 
$\hat{R}=({\mathcal D}(\hat{R}), \emptyset)$ that ${\mathcal D}(\hat{R})$
consists of a single marked punctured 
Riemann sphere, say $D$,   
and the $\partial$-marking $\iota$ is determined from $F$. 
Here, multiple fixed points of $F$ correspond to 
singular punctures of $\hat{R}$ and every singular bouquet 
consists of a single singular puncture. The 
level of every singular puncture $p$ equals 
the multiplicity of the fixed point of $F$ at $p$.

Now, we set $\rho([F])$ to be the point 
$[\{F\}, \hat{R}, \iota]$ of $\overline{VM}_d$. Then 
from the construction, the realization $\hat{R}^*$ is 
 connected. Hence Lemma \ref{lem:1}, which is proved next but independently, 
implies that $\rho([F])$ is contained in $\widehat{VM}_d$.
>From the definition of topology, 
it is easy to conclude that
the map
\[
\rho: VM_d \to \widehat{VM}_d 
\]
defined above is a continuous injection.
\end{proof}

\begin{proof}[Proof of Theorem \ref{thm:2}] 
Denseness of $\rho(VM_d)$ in $\widehat{VM}_d$ 
is trivial from the definition of 
$\widehat{VM}_d$, and we conclude Theorem \ref{thm:2} by 
Theorem \ref{thm:1} and Lemma \ref{lem:3}.\\
\end{proof}

\begin{proof}[Proof of Lemma \ref{lem:1}]
Suppose that a marked rational function 
 $({\mathcal  F}, \hat{R}, \iota)$ with nodes of type $d$ 
admits a connected realization $\hat{R}^*$. 
Then, every singular bouquet $B_\ell$ consists of 
a single singular puncture, say $q_\ell$. 

Take a representative of the component containing $q_\ell$ 
such that $q_\ell=0$ and let the reduced index decoration of 
$F\in {\mathcal  F}$ 
corresponding to $q_\ell$ at $0$ be $(c_1, \cdots c_{L_\ell})$,
where $L_\ell$ is the level of $q_\ell$.

For every $\epsilon =\{\epsilon_\nu\}$ with 
mutually disjoint $\epsilon_\nu$ sufficiently near $0$, 
let $F_{q_\ell, \epsilon}=F_{q_\ell,\{\epsilon_\nu\}}(z) $ be defined by 
\[
 \frac{1}{z- F_{q_\ell, \epsilon}(z)}=\sum_{\nu=1}^{L_\ell}\frac{\lambda_\nu}{z-\epsilon_\nu} 
\left(=\frac{A_{L_\ell-1}z^{L_\ell-1}+\cdots+A_0}{\prod(z-\epsilon_\nu)}\right),
\]
where $\lambda_\nu$ are non-zero and  $A_\nu$ depend on $\lambda_\nu$ 
and $\epsilon_\nu$. 
More precisely,  $F_{q_\ell, \epsilon}(z)$ can be written as  
\begin{align*}
  \frac{1}{z-F_{q_\ell, \epsilon}(z)} 
      &=\frac{ \sum_{\nu=1}^{L_\ell}\lambda_\nu (z-\epsilon_1)\cdots
                  \stackrel{\rotatebox{180}{$\widehat{}$}}{(z-\epsilon_\nu )}
                             \cdots(z-\epsilon_{L_\ell})}
                            {\prod_{\nu=1}^{L_\ell}(z-\epsilon_\nu)} \\
      & = \frac{\sum_{\nu=1}^{L_\ell}\lambda_\nu (z^{L_\ell-1}-\sigma_{\nu}^{(1)}z^{L_\ell-2}
                             +\sigma_{\nu}^{(2)}z^{L_\ell-3}-\cdots
                             +(-1)^{L_\ell-1}\sigma_{\nu}^{(L_\ell-1)})}
                            {\prod_{\nu=1}^{L_\ell}(z-\epsilon_\nu)},
\end{align*}
with the $j$-th elementary symmetric functions  $ \sigma_{\nu}^{(j)} $ of 
$ \epsilon_1,\cdots,\stackrel{\rotatebox{180}{$\widehat{}$}}{\epsilon_\nu },
    \cdots,\epsilon_{L_\ell}$. 
Here, 
$ \stackrel{\rotatebox{180}{$\widehat{}$}}{(z-\epsilon_\nu )}$ and 
$\stackrel{\rotatebox{180}{$\widehat{}$}}{\epsilon_\nu }$ mean 
the deletion of $(z-\epsilon_\nu )$ and 
$\epsilon_\nu $, respectively.
Hence $ A_\nu $ can be expressed as
\[
      \left(\begin{array}{c}
           A_{L_\ell-1} \\ -A_{L_\ell-2} \\ \vdots \\ (-1)^{L_\ell-1}A_0
      \end{array}\right)=
        \left(\begin{array}{cccc}
                 1 & 1 & \cdots & 1 \\
                 \sigma_1^{(1)} &
                 \sigma_2^{(1)} &\cdots &
                 \sigma_{L_\ell}^{(1)}\\
                 \vdots  & &  & \vdots \\
                 \sigma_1^{({L_\ell}-1)}&
                 \sigma_2^{({L_\ell}-1)} & \cdots &
                 \sigma_{L_\ell}^{({L_\ell}-1)}
            \end{array}\right)
        \left(\begin{array}{c}
                 \lambda_1 \\ \lambda_2 \\ \vdots \\ \lambda_{L_\ell}
            \end{array}\right).
\]

Let $M$ be  the ${L_\ell}\times {L_\ell}$ matrix in the right hand side. Then  
we can show that 
\[
 \det M=  \prod_{\mu<\nu}(\epsilon_\mu -\epsilon_\nu). 
\]
Hence, for mutually distinct $ \{\epsilon_\nu  \}$,  also
$ \{A_\nu \} $ determines $ \{\lambda_\nu \} $ uniquely.
Actually, 
the inverse matrix of $M$ is
\[
\left( \dfrac{(-1)^{j-1}\epsilon_k ^{{L_\ell}-j}}
                               {\Delta_k } \right)
\qquad 
\mbox{with}\quad \Delta_k=\prod_{\nu\neq k}(\epsilon_k-\epsilon_\nu).
\]

Now, take mutually distinct $\{\epsilon_\nu \}$ arbitrarily near $0$ 
and 
fix suitable values $\{(-1)^{\nu -1}A_{L_\ell-\nu}\}$ arbitrarily near $\{c_\nu \}$ so that 
the solution $\{\lambda_\nu \}$ 
determined from them consists of 
non-zero values only. 
Let $F_{q_\ell,\epsilon}$ be defined as above 
with these $\epsilon_\nu $ and $\lambda_\nu $ for every $q_\ell$. 

For every component $D$ of $\hat{R}$, let $\{p_{D,r}\}$ and $\{q_{D,\ell}\}$ be 
the sets of all non-singular punctures and of all singular ones, 
respectively, of $D$. 
Then after taking suitable conjugates of $F_{q_{D,\ell},\epsilon}$ if necessary, 
we have a generic rational function $F_{D,\epsilon}$ on $D$ defined by
\[
\frac{1}{z-F_{D,\epsilon} (z)}= \sum_{\{p_{D,r}\}}\, 
  \frac{\lambda'_{p_{D,r}}}{z-p_{D,r}}\, 
+ \sum_{\{q_{D,\ell}\}}\, \frac{1}{z-F_{q_{D,\ell},\epsilon}(z)}.
\]
Here $\lambda'_{p_{D,r}}$ are non-zero values arbitrarily near to the indices 
$\lambda_{p_{D,r}}$ of the corresponding element of ${\mathcal  F}$ at $p_{D,r}$, 
which satisfy the index relation:
\[
\sum_{\{p_{D,r}\}}\, \lambda'_{p_{D,r}} + \sum_{\{q_{D,\ell}\}}\, A_{q_{D,\ell},L_{\ell-1}}=1,
\] 
where $A_{q_{D,\ell},L_\ell-1}$ is $A_{L_\ell-1}$ for $q_{D,\ell}$ as above 
for every $q_{D,\ell}$. Set ${\mathcal  F}_\epsilon= \{F_{D,\epsilon}\}$, where 
$D$ moves all components of $\hat{R}$, and we obtain 
points $[{\mathcal  F}_\epsilon, \hat{R},\iota]$ arbitrarily near to the given 
 $[{\mathcal  F}, \hat{R}, \iota]$ in $\overline{VM}_d$.
 
Now by a standard surgery by reopening non-singular nodes, 
we can approximate such $[{\mathcal  F}_\epsilon, \hat{R},\iota]$
arbitrarily by points in $GVM_d$, which implies that 
$({\mathcal  F}, \hat{R}, \iota)$ is of degree $d$, and 
we have proved Lemma \ref{lem:1}.
\end{proof}

\begin{proof}[Proof of Proposition \ref{prop:1}] 
When $d\leq 4$, it is easy to see that 
 there are no partially crushed $(d+1)$-punctured 
Riemann sphere $\hat{R}$ with nodes such that 
the realization of $\hat{R}$ is disconnected. Thus 
Proposition \ref{prop:1} follows by Lemma \ref{lem:1}. 

Suppose that $d=5$, and let $\hat{R}$ be 
a partially crushed $6$ punctured 
Riemann sphere with nodes such that the realization 
of $\hat{R}$ is disconnected. 
Then there is essentially only one possibility for $\hat{R}$, 
namely, $\hat{R}$ is
 obtained from $6$ punctured 
Riemann sphere $\hat{S}=(\{D_1,D_2,D_3\}, N(\hat{S}))$ with two nodes, which  
connect $D_3$ with $D_1$ and with $D_2$,  
by crushing $D_3$. Hence, 
the single singular bouquet $B$ of $\hat{R}$ consists of 
two punctures of $D_1$ and $D_2$, and the levels of them are $2$. 
We may assume that $D_1=D_2 = \bC-\{0,1\}$, and $0$ corresponds to the 
singular punctures.

Let $[{\mathcal  F}, \hat{R}, \iota]$ be any point in $\overline{VM}_5$, and 
$(\lambda_1^j, \lambda_2^j; (c_1^j,c_2^j))$ be the reduced 
index decoration on $D_j$ corresponding to ${\mathcal  F}$ for each $j$. 
Then $c_1^j$ is determined from $\lambda_1^j$ and $\lambda_2^j$ 
by the index relation on $D_j$.
For every $k$, we consider a generic rational function $F_k$ of degree 
$5$ defined by 
\[
\frac{1}{z-F_k(z)} = \frac{\tilde{\lambda}_{k,1}^1}{z+2}+
\frac{\tilde{\lambda}_{k,2}^1}{z+2-\epsilon_{k,1}}+
 \frac{\tilde{\lambda}_{k,1}^2}{z-2}+
 \frac{\tilde{\lambda}_{k,2}^2}{z-2-\epsilon_{k,2}}+
 \frac{\tilde{c}_{k,1}}{z}+\frac{\tilde{c}_{k,2}}{z-1},
\]
where the fixed points $0, (-1)^j\,2,(-1)^j\, 2 + \epsilon_{k,j}$ correspond to 
punctures $0,\infty,1$ of $D_j$ for each $j$ and every $k$, 
$\tilde{\lambda}_{k,\nu}^j$ and 
$\tilde{c}_{k,\ell}$ are non-zero and 
satisfy that
\[
\sum_{j,\nu}\, \tilde{\lambda}_{k,\nu}^j + \sum_\ell \tilde{c}_{k,\ell} =1,
\]
for every $k$, 
and $\epsilon_{k,j}$ 
converge to $0$ for each $j$ and $\tilde{\lambda}_{k,\nu}^j$ tend to 
$\lambda_{\nu}^j$ for every $j$ and $\nu$ as $k\to \infty$. 
Since $D_3$ is crushed, we should choose $\tilde{c}_{k,\ell}$ so that 
they tend 
to $\infty$.

Now, we set 
\[
\epsilon_{k,j} = \frac{a_k^j}{k^2}
\]
with bounded non-zero $a_k^j$ 
for every $k$ and $j$, which are determined below.
Take the 
conformal embeddings of $\bC-\{0,1, \pm 2, (-1)^j2+\epsilon_{k,j}\}$ 
into $\bC-\{0,1\}$ which fix $0$ and send $(-1)^j2$ and 
$(-1)^j2+\epsilon_{k,j}$ to $\infty$ and $1$, respectively, 
and hence are the M\"obius transformations
\[
S_{k,j}(z) = \mu_{k,j} \, \frac{z}{z-(-1)^j2}
\qquad \mbox{with}\quad 
\mu_{k,j} = \frac{\epsilon_{k,j}}{(-1)^j2+\epsilon_{k,j}}\,
\approx 
\frac{\epsilon_{k,j}}{(-1)^j\,2}.
\]
(Here and in the sequel, $a_k \approx b_k$ means 
$\lim_{k\to \infty}\, a_k/b_k =1$.)
Note that 
\[
\tilde{c}_{k,1}+ \tilde{c}_{k,2} \approx  c_1^1+c_1^2-1
=1 -\sum_{j,\nu}\, \lambda_\nu ^j,\quad 
\mbox{and}\quad 
\tilde{c}_{k,1}S_{k,j}(1) \approx -c_2^j.
\]

Now, if  $c_2^j$ is non-zero, 
then we set $b_k^j = -c_2^j$, and if not, then we set 
$b_k^j = 1/k$, for every $k$ and each $j$. 
Then we can find bounded non-zero values $a_k^j$ 
which satisfy the equation
\[
\frac{b_{k,1}}{S_{k,1}(1)} = \frac{b_{k,2}}{S_{k,2}(1)}
\]
which we take as $\tilde{c}_{k,1}$ for every $k$. 
Here note that, if $b_k^2 = o(b_k^1)$, for instance, 
then we can take such $a_k^j$ that $a_k^2 = o(a_k^1)$ 
and hence $\epsilon_{k,2}=o(\epsilon_{k,1})$.

These $F_k$ determine the points in 
$GVM_d$ with the marking induced from  above. 
By construction, $\tilde{c}_{k,\ell}$ tend to $\infty$, 
and hence we can see that they converge to 
 $[\hat{F}, \hat{R}, \iota]$  
as $k \to \infty$. 
Thus we conclude the assertion. 
\end{proof}

Finally, we show that Proposition \ref{prop:1} is best possible. Actually, 
$\overline{VM}_d-\widehat{VM}_d$ is non-empty for every $d\geq 6$.

\begin{example}\label{exm:2}
{\rm 
For the sake of simplicity, we consider the case  $d=6$ only, 
for the other cases can be treated by the same arguments.
Let $[{\mathcal  G}, \hat{R},\iota]$ be a point in $\overline{VM}_6$, 
where $\hat{R}=(\{D_1,D_2\}, N(\hat{R}))$ is 
 as in the proof of Proposition \ref{prop:1}, i.e.,   
$\hat{R}^*$ is 
disconnected and  $D_j$ $(j=1,2)$ are  
$\bC-\{0,1\}$, whose punctures at $0$ are 
in the same singular bouquet, which is of level $3$ in this case. 
We set ${\mathcal  G}=\{G_1,G_2\}$, where $G_j$ are 
defined by  
\[
\frac{1}{z-G_j(z)} = \frac{2}{z-1}- \frac{1}{z} 
                    + \frac{(-1)^j}{z^2}+ \frac{1}{z^3}.
\]
Then $[{\mathcal  G}, \hat{R},\iota] \in  \overline{VM}_6 -\widehat{VM}_6$. 
}
\end{example}

Indeed, if not, then it is the limit of a 
suitable sequence of points in $VGM_6$ determined by generic rational 
functions $F_k$ of degree $6$. 
By taking a M\"obius conjugate if necessary, we may assume that such 
$F_k$ are given by
\begin{eqnarray*}
\frac{1}{z-F_k(z)} &=& \frac{\eta_{k,1}}{z+2}+
 \frac{\kappa_{k,1}}{z+2-\epsilon_{k,1}}
+\frac{\eta_{k,2}}{z-2}+ \frac{\kappa_{k,2}}{z-2-\epsilon_{k,2}}\\
& &+ \frac{\lambda_{k,1}}{z} + \frac{\lambda_{k,2}}{z-\delta_k} +
\frac{\lambda_{k,3}}{z-\delta_k'}, 
\end{eqnarray*}
where 
\[
\sum_{j}\, (\eta_{k,j}+\kappa_{k,j}) + \sum_\nu\, \lambda_{k,\nu} = 1,
\]
 $\kappa_{k,j}$ and $\epsilon_{k,j}$ are non-zero 
and tend to $0$, while $\eta_{k,j}$ tend to $2$, 
as $k\to \infty$ for each $j$,  
 and $\lambda_{k,\nu}$ 
tend to $\infty$ as $k\to \infty$ for some $\nu$. 
Also, $\delta_k$ and 
$\delta_k'$  are mutually distinct, equal none of
 $\{0,\pm 2, (-1)^j\, 2+\epsilon_{k,j}\}$, 
and may be assumed to converge to finite values, 
say $a$ and $a'$, respectively. 
Note that $a, a'$ may belong to $\{0,\pm 2\}$.

Now, we may assume that the marking-preserving 
conformal embeddings of 
\[
\bC- \{0,\pm 2, \delta_k,\delta_k', (-1)^j\, 2+\epsilon_{k,j}\}
\]
into $\bC-\{0,1\}$ fix $0$ and send 
$(-1)^j\ 2$ and$ (-1)^j\, 2+\epsilon_{k,j}$ to 
$\infty$ and $1$, respectively, and hence  
are again given by M\"obius transformations
$S_{k,j}(z)$ defined in the proof of Proposition \ref{prop:1}. 

>From the assumption, we may assume 
without loss of generality, that $S_{k,j}(\delta_k)$ and 
$S_{k,j}(\delta_k')$ converge to $0$ for each $j$,  
$ \sum_\nu \lambda_{k,\nu}=-3$ for every $k$, 
and 
$G_{k,j}$ defined by 
\begin{eqnarray*}
\frac{1}{z-G_{k,j}(z)}&=& 
\frac{\lambda_{k,1}}{z} + \frac{\lambda_{k,2}}{z-S_{k,j}(\delta_k)} +
\frac{\lambda_{k,3}}{z-S_{k,j}(\delta_k')}\\
& &+\frac{2}{z-1}
+\frac{2}{z-S_{k,j}((-1)^{3-j}\,2+\epsilon_{k,3-j})}
\end{eqnarray*}
converge to $G_j$ for each $j$ as $k\to \infty$. Set 
\[
\frac{\lambda_{k,1}}{z} + \frac{\lambda_{k,2}}{z-S_{k,j}(\delta_k)} +
\frac{\lambda_{k,3}}{z-S_{k,j}(\delta_k')}= 
\frac{-3z^2 +b_{k,j}z+ a_{k,j}}{z(z-S_{k,j}(\delta_k))(z-S_{k,j}(\delta_k'))},
\]
and write $S_{k,j}(\delta_k)$ and $S_{k,j}(\delta_k')$ simply as $S_{k,j}$ and 
$S_{k,j}'$.  
Then simple computations show that
\begin{align*}
   \lambda_{k,1} &= \frac{a_{k,j}}{ S_{k,j}S_{k,j}'}, \\
   \lambda_{k,2} &= \frac{-a_{k,j}-b_{k,j}S_{k,j}+3S_{k,j}^2}
                        {S_{k,j}( S_{k,j}'-S_{k,j})},\\
   \lambda_{k,3} &= \frac{a_{k,j}+b_{k,j}S_{k,j}'-3(S_{k,j}')^2}
                        {S_{k,j}'( S_{k,j}'-S_{k,j})}.
\end{align*}
Here recall that $(a_{k,j},b_{k,j})$ converge to 
$(1, (-1)^j)$ for each $j$. 

Now, by the first equation, we have
\[
\frac{a_{k,1}}{ S_{k,1}S_{k,1}'}= 
\frac{a_{k,2}}{ S_{k,2}S_{k,2}'},
\]
or more precisely, 
\[
\frac{\mu_{k,1}^2}{a_{k,1}(\delta_k+2)(\delta_k'+2)}
= \frac{\mu_{k,2}^2}{a_{k,2}(\delta_k-2)(\delta_k'-2)}.
\]
Hence
\begin{eqnarray*}
\frac{S_{k,1}( S_{k,1}'-S_{k,1})}{a_{k,1}}
&=&
\frac{\mu_{k,1}^2}{a_{k,1}}\,
\frac{\delta_k}{\delta_k+2}
\frac{2(\delta_k'-\delta_k)}{(\delta_k+2)(\delta_k'+2)}\\
& & \\
&=&
\frac{2-\delta_k}{2+\delta_k}\frac{\mu_{k,2}^2}{a_{k,2}}\,
\frac{\delta_k}{\delta_k-2}
\frac{-2(\delta_k'-\delta_k)}{(\delta_k-2)(\delta_k'-2)}\\
& & \\
&=& 
\frac{2-\delta_k}{2+\delta_k}\frac{S_{k,2}( S_{k,2}'-S_{k,2})}{a_{k,2}}.
\end{eqnarray*}
Here, a rough estimate of the equation for $\lambda_{k,2}$ shows that 
\[
\lambda_{k,2} \approx 
\frac{-a_{k,1}}
{S_{k,1}( S_{k,1}'-S_{k,1})}\approx 
\frac{-a_{k,2}}
{S_{k,2}( S_{k,2}'-S_{k,2})}.
\]
Thus
\[
\frac{2+\delta_k}{2-\delta_k}\approx 1, \quad \mbox{ i.e.,}\quad 
\delta_k \to 0.
\]
Since $a_{k,j}\to 1$ and $\delta_k\not=0$, we conclude that 
\begin{eqnarray*}
& &\frac{-a_{k,1}}
{S_{k,1}( S_{k,1}'-S_{k,1})}- 
\frac{-a_{k,2}}
{S_{k,2}( S_{k,2}'-S_{k,2})} \\
&\approx &
\frac{-2\delta_k}{2-\delta_k}\, \frac{-a_{k,1}}
{S_{k,1}( S_{k,1}'-S_{k,1})},
\end{eqnarray*}
which is non-zero and not 
\[
\frac{o(\delta_k)}
{S_{k,1}( S_{k,1}'-S_{k,1})} 
\]
as $k\to \infty$.

On the other hand, since
\[
S_{k,j} = O(\epsilon_{k,j}\delta_k) = o(\delta_{k}),
\] 
we should have  
\begin{eqnarray*}
0&=&\frac{-a_{k,1}-b_{k,1}S_{k,1}+3S_{k,1}^2}
{S_{k,1}( S_{k,1}'-S_{k,1})}-
\frac{-a_{k,2}-b_{k,2}S_{k,2}+3S_{k,2}^2}
{S_{k,2}( S_{k,2}'-S_{k,2})}\\
&=& \frac{(-a_{k,1}+o(\delta_{k}))}
{S_{k,1}( S_{k,1}'-S_{k,1})}- 
\frac{(-a_{k,2}+o(\delta_{k}))}
{S_{k,j}( S_{k,2}'-S_{k,2})} \\
& =& \frac{-a_{k,1}}
{S_{k,1}( S_{k,1}'-S_{k,1})}- 
\frac{-a_{k,2}}
{S_{k,j}( S_{k,2}'-S_{k,2})} + \frac{o(\delta_k)}
{S_{k,1}( S_{k,1}'-S_{k,1})},
\end{eqnarray*}
which is a contradiction.

\begin{remark}
{\rm 
The condition that the ``last'' components in the 
reduced index decorations at $0$ are non-zero is crucial.

Also note that, even if we consider to approximate 
$[{\mathcal  G}, \hat{R},\iota]$ 
by points in $VM_d-GVM_d$ corresponding to the condition that
 $\delta_k=\delta_k'=0$, 
we still have a contradiction more easily. 
Actually, if the approximating functions 
$H_k$ are defined by 
\begin{eqnarray*}
\frac{1}{z-H_k(z)} &=& \frac{\eta_{k,1}}{z+2}+
 \frac{\kappa_{k,1}}{z+2-\epsilon_{k,1}}
+\frac{\eta_{k,2}}{z-2}+ \frac{\kappa_{k,2}}{z-2-\epsilon_{k,2}}\\
& &+ \frac{\lambda_{k,1}}{z} + \frac{\lambda_{k,2}}{z^2} +
\frac{\lambda_{k,3}}{z^3}, 
\end{eqnarray*}
where $\sum_{j}\, (\eta_{k,j}+\kappa_{k,j}) + \lambda_{k,1} =1$ 
and $\eta_{k,j}$ and $\kappa_{k,j}$ 
tend to $2$ and $0$, respectively, as $k\to \infty$, 
then the corresponding $G_{k,j}$ as before can be defined by  
\begin{eqnarray*}
\frac{1}{z-G_{k,j}(z)}&=& 
-\frac{3}{z} - 
\frac{\epsilon_{k,j}(\lambda_{k,2}+(-1)^j2 \lambda_{k,3})}{4z^2} 
+\frac{\epsilon_{k,j}^2\lambda_{k,3}}{4z^3}\\
& &+\frac{2}{z-1}
+\frac{2}{z-S_{k,j}((-1)^{2-j}\,2+\epsilon_{k,2-j})},
\end{eqnarray*}
which should converge to $G_j$ for each $j$ as $k\to \infty$. 
But this is again impossible.
}
\end{remark}

\vskip 10 mm
\end{document}